\def\RR{{\mathbb R}}
\def\ZZ{{\mathbb Z}}
\def\eb{{\bf e}}
\newtheorem{thm}{Theorem}[section]
\newtheorem{prop}[thm]{Proposition}
\newtheorem{lemma}[thm]{Lemma}
\theoremstyle{definition}
\newtheorem{rei}[thm]{Example}
\title[Ehrhart series of fractional stable set polytopes]{Ehrhart series of fractional stable set polytopes of finite graphs}
\author{Ginji Hamano}
\address{%
Department of Pure and Applied Mathematics\\
Graduate School of Information Science and Technology\\
Osaka University\\
Suita, Osaka 565-0871\\
Japan}
\email{g-hamano@ist.osaka-u.ac.jp}
\author{Takayuki Hibi}
\address{%
Department of Pure and Applied Mathematics\\
Graduate School of Information Science and Technology\\
Osaka University\\
Suita, Osaka 565-0871\\
Japan}
\email{hibi@math.sci.osaka-u.ac.jp}
\author{Hidefumi Ohsugi}
\address{%
Department of Mathematical Sciences\\
School of Science and Technology\\
Kwansei Gakuin University\\
Sanda, Hyogo 669-1337\\
Japan} 
\email{ohsugi@kwansei.ac.jp}
\date{}
\keywords{Ehrhart series, Ehrhart rings, fractional stable set polytopes, Gorenstein Fano polytopes,
unimodal $\delta$-vectors}
\subjclass{Primary 52B05; Secondary 52B20}
\begin{document}

\begin{abstract}
The fractional stable set polytope ${\rm FRAC}(G)$ of a simple graph $G$ with $d$ vertices is a rational polytope that is
the set of nonnegative vectors $(x_1,\ldots,x_d)$ satisfying $x_i+x_j\le 1$ for every edge $(i,j)$ of $G$.
In this paper we show that 
(i) The $\delta$-vector of 
a lattice polytope $2 {\rm FRAC}(G)$ is alternatingly increasing,
(ii) The Ehrhart ring of ${\rm FRAC}(G)$ is Gorenstein,
(iii) The coefficients of the numerator of the Ehrhart series of ${\rm FRAC}(G)$ are symmetric, unimodal and computed by
the $\delta$-vector of  $2 {\rm FRAC}(G)$.
\end{abstract}
\maketitle

\section*{Introduction} 

The Ehrhart series of a rational convex polytope is one of the most important topics in combinatorics.
Let $P$ be a $d$-dimensional rational convex polytope in $\mathbb{R}^N$.
For each $n \in \mathbb{N}$, let 
$nP = \{n \alpha  \mid  \alpha \in P\}$ and define the function
$i(P,n):=\sharp ( nP\cap \mathbb{Z}^N ).$
Thus $i(P,n)$ is the number of lattice points contained in $nP$, called the 
{\em Ehrhart quasi-polynomial} of $P$.
It is known that $i(P,n)$ is indeed a quasi-polynomial of degree $d$.
In particular, if $P$ is a lattice polytope, i.e., all vertices of $P$ are lattice points,
then $i(P,n)$ is a polynomial and called the {\em Ehrhart polynomial} of $P$.
The generating function of the Ehrhart quasi-polynomial is defined by
$E(P,t):=1+\sum_{n=1}^{\infty} i(P,n)t^n$
and called the {\em Ehrhart series} of $P$.
Let $m$ be the smallest natural number $k$ for which $k P$ is a lattice polytope
and let $v$ be the smallest natural number $k$ for which $kP$ has a lattice point in its interior.
It is known that $E(P,t)$ is a rational function of degree $-v$ and has an expression
$E(P,t):=g(P,t)/(1-t^m)^{d+1}$
where $g(P,t)$ is a polynomial of degree $m(d+1) -v$ with nonnegative integer coefficients.
In particular, if $P$ is a lattice polytope, then $m=1$ and hence
$E(P,t)=\delta(P,t)/(1-t)^{d+1}$
where $\delta(P,t) = \delta_0 + \delta_1 t + \cdots + \delta_{d+1-v}
t^{d+1-v}$ is a polynomial of degree $d+1 -v$, called the {\em $\delta$-polynomial} of $P$.
The coefficients $(\delta_0,\ldots,\delta_{d+1-v})$ of $\delta(P,t)$
is called the {\em $\delta$-vector} (or $h$-vector, $h^*$-vector) of $P$.
Next, we define the Ehrhart ring of a $d$-dimensional rational polytope
$P\subset \mathbb{R}^N$.
Let $[A_K(P)]_n$ be the linear space over a field $K$ whose basis is 
the set of Laurent monomials $ x_1^{\alpha_1} \cdots x_N^{\alpha_N} t^n$
with $(\alpha_1,\ldots, \alpha_N) \in nP
 \cap \mathbb{Z}^N$.
Then
$A_K(P):=\bigoplus_{n\ge 0}[A_K(P)]_n$
is called the {\em Ehrhart ring} of $P$.
If $A_K(P)$ is Gorenstein, then
the coefficients of the numerator of $E(P,t)$ are symmetric.
The {\em dual polytope} of $P$ is defined by 
$P^{\vee}=\{{\bf x} \in\mathbb{R}^d \ | \ \langle {\bf x}, {\bf y} \rangle\le 1
\mbox{ for all } {\bf y}\in P\}$, where $\langle {\bf x}, {\bf y} \rangle$ is the usual inner product of $\mathbb{R}^d$. 
The notion of dual polytopes appears in a criterion for $A_K(P)$ to be Gorenstein.
Let $P\subset \mathbb{R}^d$ be a lattice polytope of dimension $d$.
We say that $P$ is a {\em Fano polytope} if the origin of $\mathbb{R}^d$ is the unique lattice point belonging to the interior of $P$.
A Fano polytope is called {\em Gorenstein} if its dual polytope is a lattice polytope.
(A Gorenstein Fano polytope is often called a {\em reflexive polytope} in the literature.)

Let $G$ be a finite simple graph on the vertex set $[d] = \{1,2,\ldots,d\}$
and let $E(G)$ be the edge set of $G$.
Throughout this paper, we always assume that $G$ has no isolated vertices.
Given a subset $W \subset [d]$, we associate the $(0,1)$-vector
$\rho(W) = \sum_{j \in W} {\eb}_j \in {\RR}^d.$
Here, ${\bf e}_i$ is the $i$th unit coordinate vector of ${\RR}^d$. 
In particular, $\rho(\emptyset)$ is the origin of $\RR^d$.
A subset $W$ is called {\em stable} if $\{i,j\} \notin E(G)$ for all $i,j \in W$
with $i \neq j$.
Note that the empty set and each single-element subset of $[d]$ are stable.
Let $S(G)$ denote the set of all stable sets of $G$.
The {\em stable set polytope} ({\em independent set polytope}) ${\rm STAB} (G) \subset \RR^d$ 
of a simple graph $G$ is the $(0,1)$-polytope
which is the convex full of $\{\rho(W) \ | \  W \in S(G)\}$.
Stable set polytopes are very important in many areas, e.g., 
optimization theory.
The $\delta$-vector of the stable set polytope of 
a perfect graph is studied in
\cite{A, OH}.
On the other hand, the {\em fractional stable set polytope} ${\rm FRAC} (G)$ of $G$ is the $d$-polytope in $\mathbb{R}^d$ defined by
$$ 
{\rm FRAC} (G):=
\left\{(x_1,\ldots,x_d) \in \RR^d
\left|
\begin{array}{cc}
x_i \ge 0 & (1\le i \le d)\\
x_i+x_j\le 1 & ( (i,j)\in E(G) )
\end{array}
\right.
\right\}.
$$
In general, we have
${\rm STAB}(G) \subset {\rm FRAC} (G)$.
Each vertex of ${\rm FRAC}(G)$ belongs to $\{0,1/2,1\}^d$
(see, e.g, \cite{NT}).
It is known that ${\rm FRAC} (G) = {\rm STAB}(G)$ if and only if $G$ is bipartite.
If $G$ is bipartite, then ${\rm STAB}(G)$ has a unimodular
triangulation, and
the $\delta$-vector of ${\rm STAB}(G)$ 
is symmetric and unimodal (see \cite{A, BR, OH}).
Note that, if $G$ is bipartite, then ${\rm STAB}(G)$ 
 is the {\em chain polytope} of a poset $P$ of rank $1$
whose comparability graph is $G$,
and affinely equivalent to the {\em order polytope}
of the poset $P$ (see \cite{Rich}).
The purpose of this paper is to study the Ehrhart series of ${\rm FRAC}(G)$.
The following two polytopes will play important roles:
\begin{eqnarray*}
{\mathcal P}(G) &=& 2 \cdot {\rm FRAC}(G),\\
{\mathcal Q}(G) &=& 3 \cdot {\rm FRAC}(G) - (1,\ldots,1)\\
&=&
\left\{
(x_1,\ldots,x_d) \in \RR^d
\left|
\begin{array}{cc}
 x_i \ge -1 & (1\le i \le d)\\
 x_i+x_j\le 1 & ((i,j)\in E(G))
\end{array}
\right.
\right\}.
\end{eqnarray*}
In \cite{Steingrimsson}, Steingr\'{i}msson called
the lattice polytope ${\mathcal P}(G)$ 
the {\em extended $2$-weak vertex-packing polytope} of $G$
and studied the structure of ${\mathcal P}(G)$.
In particular, he constructed a unimodular triangulation of ${\mathcal P}(G)$ and
showed that the $\delta$-vector of ${\mathcal P}(G)$
is obtained by a descent statistic on a subset of the hyperoctahedral group determined by $G$.

This paper is organized as follows.
In Section 1, we show that the $\delta$-vector $(\delta_0,\ldots, \delta_{d-1})$ of ${\mathcal P}(G)$
is {\em alternatingly increasing} (\cite[Definition 2.9]{SV}), i.e.,
$$
\delta_0 \le \delta_{d-1} \le \delta_1 \le \delta_{d-2}
\le \cdots \le
\delta_{\lfloor d/2 \rfloor-1} \le
\delta_{d- \lfloor d/2 \rfloor} \le
\delta_{\lfloor d/2 \rfloor}.
$$
In Section 2, we study the structure of ${\mathcal Q}(G)$ in order to show
that the Ehrhart ring of ${\rm FRAC} (G)$ is Gorenstein.
By using this result, in Section 3, we give a formula for
the numerator of the Ehrhart series
$E({\rm FRAC} (G),t):=g({\rm FRAC} (G),t)/(1-t^2)^{d+1}$
via the $\delta$-vector of ${\mathcal P}(G)$.
Since the Ehrhart ring of ${\rm FRAC} (G)$ is Gorenstein
and since the $\delta$-vector of ${\mathcal P}(G)$
is alternatingly increasing, it follows that the coefficients of $g({\rm FRAC} (G),t)$
is symmetric and unimodal.
Finally, in Section 4, we discuss the dual polytope ${\mathcal Q}(G)^{\vee}$
of ${\mathcal Q}(G)$.

\section{The $\delta$-vector of ${\mathcal P}(G)$}

First, we review the results in \cite{Steingrimsson}.
Let $B_d$ denote the all signed permutation words on $[d]=\{1,2,\ldots,d\}$.
For example, if $d=2$,
$$
B_2=
\{
1\ 2,2\ 1,\overline{1}\ 2,2 \ \overline{1},1\ \overline{2},\overline{2}\ 1, \overline{1}\ \overline{2},\overline{2} \ \overline{1}
\},
$$
where $\overline{1}=-1$ and $\overline{2}=-2$.
We order the letters in signed permutations as integers,
i.e., $ \cdots<  \overline{3} < \overline{2}< \overline{1}
<0 < 1 < 2 < 3 < \cdots $.
An element $i \in [d]$ is called a {\em descent} 
in $\pi = a_1 \cdots a_d \in B_d$ if one of the following holds
(\cite[Definition 5]{Steingrimsson}):
\begin{itemize}
\item[(i)]
$i < d$ and $a_i > a_{i+1}$;
\item[(ii)]
$i = d$ and $a_i > 0$.
\end{itemize}
Let ${\rm des} (\pi)$ denote the number of descents in
$\pi \in B_d$.
For example, for $\pi = 2  \ \overline{3}\   \overline{4}\ 1\in B_4$, 
 ${\rm des} (\pi) = 3$ since the descents of $\pi$ are $1$, $2$ and $4$.
For any subset $S$ of $B_d$,
the {\em descent polynomial} of $S$ is 
$D(S, t):= \sum_{\pi \in S} t^{{\rm des}(\pi)}$.
Let $G$ be a simple graph on the vertex set $[d]$ and the edge set $E(G)$.
We define a subset $\Pi(G)$ of $B_d$ as follows (\cite[Definition 11 and Theorem 12]{Steingrimsson}):
$$
\Pi(G)=
\left\{
\pi \in B_d\ 
\left| \ 
\begin{array}{c}
\mbox{if } (i, j) \in E(G) \mbox{ and } +i 
\mbox{ appears in } \pi, \\
\mbox{ then } -j 
\mbox{ must precede } +i \mbox{ in } \pi
\end{array}
\right.
\right\}.
$$

\begin{prop}[\cite{Steingrimsson}]
Let $G$ be a finite simple graph.
Then the $\delta$-polynomial of ${\mathcal P}(G)$ equals the descent polynomial
$D(\Pi(G),t)$.
\end{prop}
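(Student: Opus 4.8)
The plan is to reconstruct Steingr\'{i}msson's argument by means of a unimodular triangulation. Since the $\delta$-polynomial is invariant under lattice translation, I would first replace ${\mathcal P}(G)=2\cdot{\rm FRAC}(G)$ by the translate ${\mathcal P}(G)-(1,\ldots,1)$; using that $G$ has no isolated vertices (so the inequalities $x_i\le 1$ are automatic), one computes
$$
{\mathcal P}'(G)\ :=\ {\mathcal P}(G)-(1,\ldots,1)\ =\ [-1,1]^d\cap\bigcap_{(i,j)\in E(G)}\{{\bf x}\in\RR^d:\ x_i+x_j\le 0\}.
$$
The key point is that every hyperplane $x_i+x_j=0$ that cuts ${\mathcal P}'(G)$ out of the cube $[-1,1]^d$ belongs to the type $B_d$ Coxeter arrangement, whose hyperplanes are $x_i=0$, $x_i=x_j$, and $x_i=-x_j$.

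I would then use the fact that this arrangement cuts $[-1,1]^d$ into a unimodular triangulation $\Delta$ whose maximal cells are indexed by signed permutations: for $\pi=\varepsilon_1b_1\cdots\varepsilon_db_d\in B_d$ (with $b_1\cdots b_d$ a permutation of $[d]$ and $\varepsilon_k\in\{1,-1\}$) the cell is
$$
F_\pi\ =\ \{{\bf x}\in\RR^d:\ 1\ge\varepsilon_1x_{b_1}\ge\varepsilon_2x_{b_2}\ge\cdots\ge\varepsilon_dx_{b_d}\ge 0\},
$$
a unimodular $d$-simplex, being the image of $\{1\ge w_1\ge\cdots\ge w_d\ge 0\}$ under a signed permutation of coordinates. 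Since each $x_i+x_j=0$ is an arrangement hyperplane and ${\mathcal P}'(G)$ is a union of cells of $\Delta$, the triangulation $\Delta$ restricts to a unimodular triangulation $\Delta_G$ of ${\mathcal P}'(G)$, whose maximal cells are exactly the $F_\pi$ contained in $\{x_i+x_j\le 0\}$ for every edge. Reading off the signs of $x_i$ and $x_j$ on the open chamber $\varepsilon_1x_{b_1}>\cdots>\varepsilon_dx_{b_d}>0$, one checks that $F_\pi\subseteq\{x_i+x_j\le 0\}$ for all $(i,j)\in E(G)$ precisely when, whenever $+i$ occurs in $\pi$, the letter $-j$ precedes it (and symmetrically with $i$ and $j$ interchanged); that is, precisely when $\pi\in\Pi(G)$. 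Hence the maximal cells of $\Delta_G$ are indexed bijectively by $\Pi(G)$.

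Since $\Delta_G$ is a unimodular triangulation of the lattice polytope ${\mathcal P}'(G)$, the $\delta$-polynomial $\delta({\mathcal P}(G),t)=\delta({\mathcal P}'(G),t)$ equals the $h$-polynomial of the simplicial complex $\Delta_G$. As $\Delta_G$ is shellable (it is regular, or one may invoke Steingr\'{i}msson's explicit shelling), one has $h(\Delta_G,t)=\sum_{\pi\in\Pi(G)}t^{|R(F_\pi)|}$, where $R(F_\pi)$ is the restriction face of $F_\pi$ for a fixed shelling; so it suffices to show $|R(F_\pi)|={\rm des}(\pi)$. The $d+1$ facets of $F_\pi$ are $\{\varepsilon_1x_{b_1}=1\}$, the ``diagonal'' facets $\{\varepsilon_kx_{b_k}=\varepsilon_{k+1}x_{b_{k+1}}\}$ for $1\le k\le d-1$, and $\{\varepsilon_dx_{b_d}=0\}$. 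For $\pi\in\Pi(G)$ one necessarily has $\varepsilon_1=-1$ (otherwise the edge condition at the first letter $+b_1$ fails, since $b_1$ has a neighbour), so $\{\varepsilon_1x_{b_1}=1\}$ lies on $\partial{\mathcal P}'(G)$ and never belongs to $R(F_\pi)$; and with the shelling suitably oriented, the diagonal facet at $k$ belongs to $R(F_\pi)$ exactly when $a_k>a_{k+1}$, while $\{\varepsilon_dx_{b_d}=0\}$ belongs to it exactly when $\varepsilon_d=1$; that is, exactly when $k$, respectively $d$, is a descent of $\pi$ in the sense of (i)--(ii). Therefore $|R(F_\pi)|={\rm des}(\pi)$, and $\delta({\mathcal P}(G),t)=\sum_{\pi\in\Pi(G)}t^{{\rm des}(\pi)}=D(\Pi(G),t)$.

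The main obstacle is this last bookkeeping step: tracking the restriction faces of the cells of $\Delta_G$ and matching their cardinalities with the descent statistic, while making sure a single shelling realizes all of these identifications at once. For the full cube this amounts to the classical fact that the $h$-vector of this triangulation of $[-1,1]^d$ is the type-$B$ Eulerian polynomial $\sum_{\pi\in B_d}t^{{\rm des}(\pi)}$, and passing to ${\mathcal P}'(G)$ merely deletes the cells not indexed by $\Pi(G)$; one must still check that the shelling can be arranged so that $\Delta_G$ is shelled consistently, and pin down the orientation so that the descent conditions emerge exactly as in (i)--(ii) and not in a reversed or complementary form. A convenient sanity check is the single-edge graph $G$ on two vertices: here $\Pi(G)=\{\,\overline{1}\,2,\ \overline{2}\,1,\ \overline{1}\,\overline{2},\ \overline{2}\,\overline{1}\,\}$, so $D(\Pi(G),t)=1+3t$, which indeed matches the $\delta$-polynomial of the triangle ${\mathcal P}(G)$.
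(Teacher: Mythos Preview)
The paper does not prove this proposition at all: it is stated as a citation to Steingr\'{i}msson and used as a black box in the proof of Theorem~\ref{ai}. Your sketch is a faithful reconstruction of Steingr\'{i}msson's own argument (translate to the cube, triangulate by the type-$B$ Coxeter arrangement, identify the cells in ${\mathcal P}'(G)$ with $\Pi(G)$, and read off the $h$-polynomial via a shelling/half-open decomposition), and the small example you check is correct.

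The one place to be careful, which you flag yourself, is the passage from the full cube to ${\mathcal P}'(G)$: a shelling of the triangulated cube need not restrict to a shelling of $\Delta_G$, and even if it does, the restriction faces of the surviving cells may differ from those in the ambient shelling because some neighbouring cells have been removed. Steingr\'{i}msson handles this by exhibiting an explicit (lexicographic) shelling of $\Delta_G$ directly and computing the restriction faces there; alternatively one can bypass shellability entirely and use a half-open decomposition of ${\mathcal P}'(G)$ obtained from a generic vector, which gives $\delta({\mathcal P}'(G),t)=\sum_{\pi\in\Pi(G)} t^{c(\pi)}$ with $c(\pi)$ the number of visible interior facets, and then identifies $c(\pi)$ with ${\rm des}(\pi)$. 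Either way the combinatorial bookkeeping must be carried out for $\Delta_G$ itself, not inherited from the cube, so your phrase ``passing to ${\mathcal P}'(G)$ merely deletes the cells not indexed by $\Pi(G)$'' is where the real work hides.
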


By using this fact, we will show Theorem \ref{ai} below.
Note that a similar decomposition technique (i.e., $a(t) + tb(t)$) was used in \cite{Sta}
to establish Ehrhart inequalities originally due to Stanley and Hibi.

\begin{thm}
\label{ai}
Let $G$ be a simple graph with $d$ vertices.
Then there exist symmetric and unimodal polynomials
$a(t)$ of degree $d-1$ and $b(t)$ of degree $d-2$
such that $\delta({\mathcal P}(G),t) =  a(t) + tb(t)$.
In particular, the $\delta$-vector $(\delta_0,\delta_1,\ldots,\delta_{d-1})$
of ${\mathcal P}(G)$ is alternatingly increasing, i.e.,
$$
\delta_0 \le \delta_{d-1} \le \delta_1 \le \delta_{d-2}
\le \cdots \le
\delta_{\lfloor d/2 \rfloor-1} \le
\delta_{d- \lfloor d/2 \rfloor} \le
\delta_{\lfloor d/2 \rfloor}.
$$
\end{thm}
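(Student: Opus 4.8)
The strategy is to combine Proposition~1.1 with the standard characterisation of alternatingly increasing sequences (see \cite{SV}): a polynomial $h(t)=\sum_{i=0}^{s}\delta_i t^i$ with nonnegative integer coefficients and $\delta_s\neq 0$ is alternatingly increasing if and only if it admits a decomposition $h(t)=a(t)+tb(t)$ with $a(t)$ symmetric of degree $s$, $b(t)$ symmetric of degree $s-1$, and $a(t),b(t)$ both unimodal; the implication we need runs from the decomposition to the conclusion, and is obtained by elementary manipulations comparing the coefficients of $a(t)$ and $b(t)$. By Proposition~1.1, $\delta({\mathcal P}(G),t)=D(\Pi(G),t)=\sum_{\pi\in\Pi(G)}t^{{\rm des}(\pi)}$, and since this polynomial has degree $d-1$ we take $s=d-1$ and only need to produce the desired $a(t)$ and $b(t)$; this is the $a(t)+tb(t)$ decomposition technique noted right after Proposition~1.1.

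The candidate decomposition I would use is obtained by splitting $\Pi(G)$ according to the sign of the last letter. Write $\Pi(G)=\Pi(G)_-\sqcup\Pi(G)_+$, where $\pi=a_1\cdots a_d$ belongs to $\Pi(G)_+$ precisely when $a_d>0$; recalling that position $d$ is a descent of $\pi$ exactly when $a_d>0$, every $\pi\in\Pi(G)_+$ satisfies ${\rm des}(\pi)\ge 1$ and every $\pi\in\Pi(G)_-$ satisfies ${\rm des}(\pi)\le d-1$. Then
$$
a(t)=\sum_{\pi\in\Pi(G)_-}t^{{\rm des}(\pi)},\qquad
b(t)=\sum_{\pi\in\Pi(G)_+}t^{{\rm des}(\pi)-1}
$$
have nonnegative integer coefficients, $\deg a\le d-1$, and $\delta({\mathcal P}(G),t)=a(t)+tb(t)$. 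Moreover $\deg b\le d-2$: a word in $\Pi(G)_+$ with a descent at every one of $1,\dots,d-1$ would satisfy $a_1>\cdots>a_d>0$, forcing all letters to be positive and hence $[d]$ to be a stable set of $G$, which is impossible since $G$ has no isolated vertices. Finally $a_0=1$ (the unique element of $\Pi(G)$ with no descent, namely $\overline{d}\,\overline{d-1}\,\cdots\,\overline{1}$, lies in $\Pi(G)_-$) and $b_0\ge 1$.

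It remains to prove that $a(t)$ and $b(t)$ are symmetric and unimodal; this is where the substantial work is, and I expect it to be the main obstacle. For the symmetry one wants an involution on $\Pi(G)_-$ and one on $\Pi(G)_+$, each preserving the sign of the last letter and complementing the descent set inside $\{1,\dots,d-1\}$; these would give $a_i=a_{d-1-i}$ and $b_i=b_{d-2-i}$, and, together with $a_0=1$ and $b_0\ge 1$, would also improve the degree bounds to $\deg a=d-1$ and $\deg b=d-2$. The obvious candidate for $\Pi(G)_-$, namely word reversal $a_1\cdots a_d\mapsto a_d\cdots a_1$, does complement the descent set on $\{1,\dots,d-1\}$ and preserves the property $a_d<0$ (note that $\pi\in\Pi(G)$ forces $a_1<0$, otherwise the positive letter $a_1$ would have no negated neighbour before it, contradicting the absence of isolated vertices), but it does not preserve $\Pi(G)$: it turns the requirement that each positive $+i$ be preceded by the negatives $-j$ with $(i,j)\in E(G)$ into the requirement that $+i$ be \emph{followed} by those negatives. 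Thus a more delicate, $G$-sensitive involution has to be built, respecting the poset on $\{+w:w\in W\}\cup\{-v:v\notin W\}$ (with $-v\prec+w$ whenever $(v,w)\in E(G)$) that encodes membership in $\Pi(G)$; similar remarks apply to $\Pi(G)_+$. Once symmetry is in hand, unimodality of $a(t)$ and $b(t)$ requires a further argument — for instance a $\gamma$-nonnegativity statement for each of the two pieces, or a direct construction of injections between the sets $\{\pi\in\Pi(G):{\rm des}(\pi)=k\}$. With $a(t)$ and $b(t)$ shown to be symmetric and unimodal of the stated degrees, the theorem, including the concluding alternatingly increasing assertion, follows from the characterisation recalled at the outset.
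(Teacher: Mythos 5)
Your decomposition is exactly the one the paper uses: splitting $\Pi(G)$ by the sign of the last letter and setting $a(t)=D(\Pi_-,t)$, $b(t)=D(\Pi_+,t)/t$. But the proposal stops at the point where the actual content of the theorem lies: you explicitly leave unproved that $a(t)$ and $b(t)$ are symmetric and unimodal, observing only that the naive reversal involution fails and that some ``more delicate, $G$-sensitive involution'' or a $\gamma$-nonnegativity statement would be needed. That is a genuine gap, not a routine verification --- without it the alternatingly increasing conclusion does not follow.

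The paper closes this gap without any involution or $\gamma$-positivity. Write $\pi\in\Pi_{\pm}$ as a concatenation of maximal blocks of constant sign, and let $S(\pi)$ be the orbit of $\pi$ under independently permuting the letters inside each block. The membership condition defining $\Pi(G)$ only constrains the relative order of a positive letter and a negative letter, which always lie in different blocks, so $S(\pi)\subseteq\Pi_{\pm}$ and these orbits partition $\Pi_{+}$ and $\Pi_{-}$. Descents occur exactly at the block boundaries going from a positive block to a negative block (plus position $d$ when the last letter is positive), together with the ordinary descents inside each block; hence $D(S(\pi),t)$ is a power of $t$ times a product of Eulerian polynomials. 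For $\pi\in\Pi_+$ with $\gamma$ positive blocks this gives a polynomial supported on degrees $[\gamma,\,d-\gamma]$, and for $\pi\in\Pi_-$ on $[\gamma-1,\,d-\gamma]$; in each case the polynomial is symmetric and unimodal about a center ($d/2$, resp.\ $(d-1)/2$) that is \emph{independent of $\pi$}. A sum of symmetric unimodal polynomials with a common center is again symmetric and unimodal, which yields the required properties of $a(t)$ and $b(t)$, and then your concluding coefficient comparison goes through as you indicated. If you want to complete your write-up, this orbit/Eulerian-polynomial argument is the missing ingredient; the involutions you were searching for are not needed.
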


\begin{proof}
Let $\Pi_+$ (resp. $\Pi_-$) denote the set of all 
$\pi \in \Pi(G)$ such that the last number of $\pi$
is positive (resp. negative).
Note that the first number of $\pi \in \Pi(G)$ is always negative
since $G$ has no isolated vertices.

Let $\pi \in \Pi_+$.
Then $\pi$ has a representation
$$
\pi = 
m_1^{(1)} \! \cdots m_{\alpha_1}^{(1)} 
p_1^{(1)} \! \cdots p_{\beta_1}^{(1)} 
m_1^{(2)}\! \cdots m_{\alpha_2}^{(2)} 
p_1^{(2)}\! \cdots p_{\beta_2}^{(2)} 
\cdots
m_1^{(\gamma)} \! \cdots m_{\alpha_\gamma}^{(\gamma)} 
p_1^{(\gamma)}\! \cdots p_{\beta_\gamma}^{(\gamma)} 
$$
where $p_i^{(j)} > 0$ and $m_i^{(j)}<0$.
Let $S(\pi)$ denote the set of all signed permutation words on $[d]$ of the form
$$
m_{\sigma_1(1)}^{(1)} \cdots m_{\sigma_1(\alpha_1)}^{(1)} 
p_{\tau_1(1)}^{(1)} \cdots p_{\tau_1(\beta_1)}^{(1)} 
\cdots
m_{\sigma_\gamma(1)}^{(\gamma)} \cdots m_{\sigma_\gamma(\alpha_\gamma)}^{(\gamma)} 
p_{\tau_\gamma(1)}^{(\gamma)} \cdots p_{\tau_\gamma(\beta_\gamma)}^{(\gamma)} 
$$
where 
$\sigma_k \in {\mathcal S}_{\alpha_k}$
and 
$\tau_k \in {\mathcal S}_{\beta_k}$
are permutations.
It is easy to see that $S(\pi) \subset \Pi_+$.
Let
$ A_k(t)=\sum_{i=0}^{k-1} A(k,i)t^i$
denote the {\em Eulerian polynomial} whose coefficients $A(k,i)$ is {\em Eulerian number}.
It is known that
$A_k(t) = \sum_{\pi \in {\mathcal S}_k} t^{d(\pi)} $,
where $d(\pi)$ is the number of {\em usual} descent of $\pi$
(i.e., no signs involved and never a descent at $k$).
See, e.g., \cite{Rich2}.
Thus
$$
D(S(\pi),t) = 
t^{\gamma}
\prod_{j=1}^\gamma 
A_{\alpha_j}(t) A_{\beta_j}(t).
$$
It is known that 
$(A(k,0),A(k,1),\dots,A(k,k-1))$ is symmetric and
{\em unimodal}, i.e., 
$A(k,i)\le A(k,i+1)$
for $0\le i \le \lfloor 2/k \rfloor$.
The degree of $D(S(\pi),t)$
is $\gamma +\sum_{j=1}^\gamma (\alpha_j + \beta_j -2) = d -\gamma$.
Since $A_k(t) $ is symmetric and unimodal,
so is $A_{\alpha_j}(t) A_{\beta_j}(t) $.
Hence
$$
D(S(\pi),t) = s_\gamma t^\gamma + \cdots + s_{d -\gamma} t^{d -\gamma}
$$
satisfies that
$(s_\gamma,\ldots,s_{d-\gamma})$
is symmetric and unimodal.
Since
$$D(\Pi_+,t) = u_1 t  + \cdots + u_{d -1} t^{d -1}$$
is a sum of such $D(S(\pi),t)$'s,
$(u_1,\ldots,u_{d-1})$ is symmetric and unimodal.

Let $\pi \in \Pi_-$.
Then $\pi$ has a representation
$$\pi = 
m_1^{(1)} \cdots m_{\alpha_1}^{(1)} 
p_1^{(1)} \cdots p_{\beta_1}^{(1)} 
m_1^{(2)} \cdots m_{\alpha_2}^{(2)} 
p_1^{(2)} \cdots p_{\beta_2}^{(2)} 
\cdots
m_1^{(\gamma)} \cdots m_{\alpha_\gamma}^{(\gamma)},
$$
where $p_i^{(j)} > 0$ and $m_i^{(j)}<0$.
Define $S(\pi)$ as before.
Then we have
$$
D(S(\pi),t) = 
t^{\gamma-1}
A_{\alpha_\gamma}(t) 
\prod_{j=1}^{\gamma-1} 
A_{\alpha_j}(t) A_{\beta_j}(t).
$$
The degree of $D(S(\pi),t)$ is 
$\gamma -1+\alpha_\gamma-1 +\sum_{j=1}^{\gamma-1} (\alpha_j + \beta_j -2) = d -\gamma$.
Since
$$D(\Pi_-,t) = v_0+v_1 t  + \cdots + v_{d -1} t^{d -1}$$
is a sum of such $D(S(\pi),t)$'s,
$(v_0,v_1,\ldots,v_{d-1})$ is symmetric and unimodal.

We now show that the $\delta$-vector
$(\delta_0,\ldots,\delta_{d-1}) = (v_0,u_1+v_1,\ldots,u_{d-1}+v_{d-1})$
of ${\mathcal P}(G)$ is alternatingly increasing.
First, $\delta_{d-1} - \delta_0 = u_{d-1} + v_{d -1} -v_0 =u_{d-1} \ge 0 $.
Moreover, for $i = 1,2,\ldots, \lfloor d/2 \rfloor$,
we have $\delta_i - \delta_{d-i} =  u_i + v_i - u_{d-i} - v_{d -i}
=v_i  - v_{i-1} \ge 0$, and for 
$i = 1,2,\ldots, \lfloor d/2 \rfloor-1$,
we have
$\delta_{d-i-1} - \delta_i= u_{d-i-1} + v_{d -i-1} - u_i - v_i
=u_{i+1}  - u_i \ge 0$.
Thus the $\delta$-vector of ${\mathcal P}(G)$ is alternatingly increasing.
\end{proof}

\section{The Ehrhart ring of ${\rm FRAC}(G)$} 

In this section, we will show that the Ehrhart ring of ${\rm FRAC}(G)$ is Gorenstein.
In order to show that the Ehrhart ring of ${\rm FRAC}(G)$ is Gorenstein,
we will use the following criterion 
\cite[Theorem 1.1]{Hibi-Em}:

\begin{prop}
\label{criterionG}
Let $P \subset \mathbb{R}^d$ be a rational convex polytope of dimension $d$ and let $\delta \ge 1$ denote the smallest integer for which $\delta (P \setminus \partial P)\cap\mathbb{Z}^d \neq \emptyset$. Fix $\alpha \in \delta(P \setminus \partial P)\cap\mathbb{Z}^d$ and 
let $Q = \delta P-\alpha\subset\mathbb{R}^d$.
Then the Ehrhart ring $A_K(P)$ of $P$ is Gorenstein if and only if the following conditions are satisfied{\rm :}
\begin{enumerate}
\item [{\rm (i)}] The dual polytope $Q^{\vee}$ of $Q$ is a lattice polytope{\rm ;}
\item [{\rm (ii)}] Let $\widetilde{P} \subset \mathbb{R}^{d+1}$ denote the rational convex polytope which is the convex hull of the subset $\{(\beta,0)\in \mathbb{R}^{d+1} \mid \beta \in P\}\cup\{(0,\dots,0,1/\delta)\}$ in $\mathbb{R}^{d+1}$.
Then $\widetilde{P}$ is facet-reticular, that is to say, if $H$ is a hyperplane in $\mathbb{R}^{d+1}$ and if $H \cap \widetilde{P}$ is a facet of $\widetilde{P}$, then $H \cap \mathbb{Z}^{d+1} \neq \emptyset $.
\end{enumerate}
\end{prop}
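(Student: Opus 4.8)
I would prove this criterion by realising $A_K(P)$ as a normal affine semigroup ring and reading off the Gorenstein property from its canonical module. Let $C(P)=\mathbb{R}_{\ge 0}\,(P\times\{1\})\subseteq\mathbb{R}^{d+1}$ be the cone over $P$ placed at height one; then $A_K(P)=K[\,C(P)\cap\mathbb{Z}^{d+1}\,]$, graded by the last coordinate. Since $C(P)\cap\mathbb{Z}^{d+1}$ is the full lattice-point set of a rational cone, this ring is normal and Cohen--Macaulay, and by the Danilov--Stanley description its canonical module is the ideal spanned by the monomials $\mathbf{x}^{\mathbf v}$ with $\mathbf v$ in the relative interior of $C(P)$. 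Hence $A_K(P)$ is Gorenstein if and only if this module is principal, i.e.\ there is a lattice point $\mathbf v_0\in\mathrm{int}\,C(P)$ with $\mathrm{int}\,C(P)\cap\mathbb{Z}^{d+1}=\mathbf v_0+\bigl(C(P)\cap\mathbb{Z}^{d+1}\bigr)$. Write $P=\{\mathbf x:\langle a_i,\mathbf x\rangle\le b_i\ (1\le i\le m)\}$ with each $a_i\in\mathbb{Z}^d$ primitive and $b_i=p_i/q_i$ in lowest terms; the facets of $C(P)$ then have primitive inner normals $\eta_i=(-q_ia_i,p_i)\in\mathbb{Z}^{d+1}$, so $C(P)=\{\mathbf y:\langle\eta_i,\mathbf y\rangle\ge 0\ \forall i\}$. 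A routine argument using the self-similarity of the cone shows that for each $i$ the hyperplane $\langle\eta_i,\cdot\rangle=1$ already meets $\mathrm{int}\,C(P)\cap\mathbb{Z}^{d+1}$; combined with the fact that the smallest value of the last coordinate on $\mathrm{int}\,C(P)\cap\mathbb{Z}^{d+1}$ equals $\delta$, this reduces the Gorenstein property to a single statement: \emph{the point at which the hyperplanes $\langle\eta_i,\cdot\rangle=1$ meet is a lattice point of $\mathbb{Z}^{d+1}$} (necessarily at height $\delta$, and necessarily equal to $(\alpha,\delta)$).

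The remaining, and harder, task is to show this statement is equivalent to (i)$\,\wedge\,$(ii), which I would do by translating (i) and (ii) into arithmetic of the $\eta_i$. Clearing denominators gives $\delta b_i-\langle a_i,\alpha\rangle=\langle\eta_i,(\alpha,\delta)\rangle/q_i$ with $\langle\eta_i,(\alpha,\delta)\rangle$ a positive integer, and the facets of $Q=\delta P-\alpha$ are $\langle a_i,\mathbf x\rangle\le\langle\eta_i,(\alpha,\delta)\rangle/q_i$; hence $Q^{\vee}$ is a lattice polytope exactly when $\langle\eta_i,(\alpha,\delta)\rangle$ divides $q_i$ for every $i$. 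In particular, if $A_K(P)$ is Gorenstein then $\mathbf v_0=(\alpha,\delta)$ with $\langle\eta_i,(\alpha,\delta)\rangle=1$ for all $i$, which yields (i); moreover $p_i\delta\equiv 1\pmod{q_i}$, so $\gcd(\delta,q_i)=1$. On the other side, the slanted facet of the pyramid $\widetilde P$ over the $i$-th facet of $P$ lies on the hyperplane obtained by clearing denominators in $\langle a_i,\mathbf x\rangle+\delta b_i\,x_{d+1}=b_i$, and this hyperplane meets $\mathbb{Z}^{d+1}$ if and only if $\gcd(\delta,q_i)=1$; since the base facet $\{x_{d+1}=0\}$ is always lattice, (ii) says precisely that $\gcd(\delta,q_i)=1$ for every $i$. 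It then has to be verified that these two conditions hold together exactly when the meeting point of the $\langle\eta_i,\cdot\rangle=1$ hyperplanes is a lattice point at height $\delta$: the coprimalities of (ii) are what make it possible to clear every relevant denominator already at height $\delta$, while the integrality of $Q^{\vee}$ in (i) forces the resulting integral point to be the one on which all the $\eta_i$ take the value $1$.

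The main obstacle is exactly this last dictionary, i.e.\ the equivalence between the ``lattice meeting point'' condition on $C(P)$ and (i)$\,\wedge\,$(ii). The delicate ingredients are keeping careful track of the generally non-integral facet parameters $b_i=p_i/q_i$ and of the integers $\langle\eta_i,(\alpha,\delta)\rangle$, handling the linear relations among the $\eta_i$ that pin the meeting point down before any integrality is imposed, and proving the nontrivial ``if'' direction, namely that $Q^{\vee}$ being a lattice polytope together with $\gcd(\delta,q_i)=1$ for all $i$ produces the lattice point $\mathbf v_0$ with $\langle\eta_i,\mathbf v_0\rangle=1$ at height $\delta$. The auxiliary polytope $\widetilde P$ is tailor-made for this: its slanted facets sit on precisely the hyperplanes whose behaviour with respect to $\mathbb{Z}^{d+1}$ controls whether the lowest interior lattice points of $C(P)$ are positioned so that one of them generates the canonical module, which is why facet-reticularity of $\widetilde P$ is the natural partner of the dual-polytope condition (i). Everything in the first paragraph is standard theory of normal affine semigroup rings and their canonical modules.
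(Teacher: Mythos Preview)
The paper does not prove this proposition; it is quoted as \cite[Theorem~1.1]{Hibi-Em} (De~Negri--Hibi) and used as a black box. So there is no in-paper argument to compare your proposal against.

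That said, your outline is the natural one and matches in spirit how the result is established in the cited reference: realize $A_K(P)$ as the normal semigroup ring $K[C(P)\cap\mathbb{Z}^{d+1}]$, invoke Danilov--Stanley to identify the canonical module with the ideal of interior lattice points, and translate principality of that ideal into the existence of a lattice point $\mathbf v_0$ with $\langle\eta_i,\mathbf v_0\rangle=1$ for every primitive facet normal $\eta_i$. Your arithmetic readings of (i) and (ii) --- that $Q^{\vee}$ being a lattice polytope amounts to $\langle\eta_i,(\alpha,\delta)\rangle\mid q_i$, and that facet-reticularity of $\widetilde P$ amounts to $\gcd(\delta,q_i)=1$ for every $i$ --- are correct. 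What remains is precisely what you flag as the ``main obstacle'': completing the dictionary in both directions, in particular the ``if'' step showing that (i) and (ii) together force $\langle\eta_i,(\alpha,\delta)\rangle=1$ for all $i$, so that $\mathbf v_0=(\alpha,\delta)$ generates the canonical module. As written this is a well-oriented sketch rather than a finished proof; the missing details are what the De~Negri--Hibi paper supplies.
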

It is clear that there exists no lattice points in the interior of 
${\mathcal P}(G)=2{\rm FRAC}(G)$, and that the lattice point $(1,\dots,1)$ belongs to the interior of $3{\rm FRAC}(G)$.
Thus it is enough to show that conditions (i) and (ii) in Proposition \ref{criterionG} 
are satisfied when
$P = {\rm FRAC}(G)$, $\delta=3$, $\alpha=(1,\dots,1)$ and $Q ={\mathcal Q}(G)$.
A criterion for a vector to be a vertex of ${\rm FRAC}(G)$
is given in \cite[Theorem 15]{Steingrimsson}:

\begin{lemma}
\label{usefullemma}
Let $G$ be a finite simple graph with $d$ vertices.
Suppose that ${\bf v} =(v_1,\dots,v_d) \in \{0,1/2,1\}^d$ belongs to
 ${\rm FRAC}(G)$.
Let $G_S$ be the subgraph of $G$ induced by $S=\{i \in [d] \mid v_i=1/2\}$.
Then ${\bf v}$ is a vertex of ${\rm FRAC}(G)$ if and only if
either $S=\emptyset$ or each connected component of $G_S$ contains an odd cycle.
\end{lemma}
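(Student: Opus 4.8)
The plan is to use the standard criterion that a point ${\bf v}$ of a polytope given by finitely many linear inequalities is a vertex if and only if the coefficient vectors of the inequalities tight at ${\bf v}$ span $\RR^d$. So I would first determine which defining inequalities of ${\rm FRAC}(G)$ are tight at ${\bf v}=(v_1,\dots,v_d)\in\{0,1/2,1\}^d$. Put $V_0=\{i : v_i=0\}$ and $V_1=\{i : v_i=1\}$, so that $[d]=V_0\sqcup V_1\sqcup S$. Since ${\bf v}\in{\rm FRAC}(G)$, an edge $(i,j)$ with $i\in V_1$ forces $v_j\le 0$, hence $j\in V_0$; in particular $G$ has no edge inside $V_1$ and no edge between $V_1$ and $S$. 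The inequality $x_i\ge 0$ is tight exactly for $i\in V_0$, and $x_i+x_j\le 1$ is tight exactly when $v_i+v_j=1$, i.e.\ for edges between $V_0$ and $V_1$ and for edges of $G_S$. Thus the tight inequalities correspond to the vectors $\eb_i$ $(i\in V_0)$, $\eb_i+\eb_j$ $((i,j)\in E(G)$, $i\in V_0$, $j\in V_1)$, and $\eb_i+\eb_j$ $((i,j)\in E(G_S))$.

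Next I would reduce the spanning question to $G_S$. The vectors $\eb_i$ with $i\in V_0$ span the coordinate subspace $\RR^{V_0}$, and modulo $\RR^{V_0}$ each vector $\eb_i+\eb_j$ with $i\in V_0$, $j\in V_1$ becomes $\eb_j$; since $G$ has no isolated vertices, every $j\in V_1$ lies on such an edge, so these classes span $\RR^{V_1}$ modulo $\RR^{V_0}$. Hence the tight inequalities span $\RR^d$ if and only if $\{\eb_i+\eb_j : (i,j)\in E(G_S)\}$ spans $\RR^{S}$, where $\RR^S$ is identified with $\RR^d/\RR^{V_0\cup V_1}$. When $S=\emptyset$ this condition is vacuously satisfied, which accounts for that case; in general ${\bf v}$ is a vertex of ${\rm FRAC}(G)$ if and only if the edge vectors of $G_S$ span $\RR^S$.

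It remains to translate this into the odd-cycle condition, and here I would invoke the classical description of the (unsigned) incidence matrix of a graph: for a graph $H$ on a vertex set $W$, the vectors $\{\eb_i+\eb_j : (i,j)\in E(H)\}$ span $\RR^W$ if and only if every connected component of $H$ contains an odd cycle. The point is that ${\bf w}\in\RR^W$ is orthogonal to all the $\eb_i+\eb_j$ precisely when $w_i=-w_j$ along every edge of $H$; restricted to a connected component, this determines ${\bf w}$ up to a scalar when the component is bipartite (take $\pm 1$ on the two colour classes) and forces ${\bf w}={\bf 0}$ when the component contains an odd closed walk, i.e.\ an odd cycle. Applying this with $H=G_S$ and $W=S$ finishes the proof. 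I do not expect a serious obstacle: the only care needed is the bookkeeping of which inequalities are tight (where the hypothesis ${\bf v}\in{\rm FRAC}(G)$ is used to exclude edges incident to $V_1$ other than those landing in $V_0$) and the use of the no-isolated-vertices hypothesis in the reduction step, while the incidence-matrix fact is routine linear algebra.
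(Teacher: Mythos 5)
Your argument is correct, but there is nothing in the paper to compare it against: the authors do not prove Lemma \ref{usefullemma} at all, they simply quote it from Steingr\'{i}msson \cite[Theorem 15]{Steingrimsson} (the result goes back to Nemhauser--Trotter). Your proof is the standard one and it is complete: the reduction is sound --- the hypothesis ${\bf v}\in{\rm FRAC}(G)$ correctly rules out edges inside $V_1$ and between $V_1$ and $S$, so the tight constraints are exactly $\eb_i$ $(i\in V_0)$, the $V_0$--$V_1$ edge vectors, and the $E(G_S)$ edge vectors; the no-isolated-vertices hypothesis (which the paper assumes globally, and which is needed anyway for ${\rm FRAC}(G)$ to be bounded) gives you $\RR^{V_1}$ modulo $\RR^{V_0}$; and the rank statement for the unsigned incidence matrix (span equals $\RR^W$ iff no connected component is bipartite) is the classical fact, with your orthogonal-complement argument handling it correctly, including the degenerate case of a one-vertex component of $G_S$, which is bipartite and hence correctly excluded. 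The only cosmetic remark is that the lemma as stated in the paper does not repeat the no-isolated-vertices assumption, so it is worth flagging, as you do, exactly where that global hypothesis enters.
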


Using Lemma \ref{usefullemma}, we determine when ${\mathcal Q}(G)$ is a lattice polytope.

\begin{prop}
\label{whenislattice}
Let $G$ be a finite simple graph without isolated vertices. 
Then the following conditions are equivalent.
\begin{enumerate}
\item [{\rm (i)}] The graph $G$ is a bipartite graph{\em ;}
\item [{\rm (ii)}] The polytope ${\rm FRAC}(G)$ is a lattice polytope{\em ;}
\item [{\rm (iii)}] The polytope ${\mathcal Q}(G)$ is a lattice polytope.
\end{enumerate}
\end{prop}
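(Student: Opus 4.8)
The plan is to prove the cycle of implications (i) $\Rightarrow$ (ii) $\Rightarrow$ (iii) $\Rightarrow$ (i). The implication (i) $\Rightarrow$ (ii) is already recorded in the introduction: it is a classical fact that ${\rm FRAC}(G)={\rm STAB}(G)$ exactly when $G$ is bipartite, and ${\rm STAB}(G)$ is a $(0,1)$-polytope, hence a lattice polytope. So here one only needs to cite this and note that ${\rm STAB}(G)$ has all its vertices in $\{0,1\}^d$.

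For (ii) $\Rightarrow$ (iii): since ${\mathcal Q}(G) = 3\,{\rm FRAC}(G) - (1,\dots,1)$ is an integral translate of a dilation of ${\rm FRAC}(G)$, the map ${\bf x}\mapsto 3{\bf x}-(1,\dots,1)$ carries vertices to vertices; if every vertex ${\bf v}$ of ${\rm FRAC}(G)$ lies in $\mathbb{Z}^d$, then $3{\bf v}-(1,\dots,1)\in\mathbb{Z}^d$ as well, so ${\mathcal Q}(G)$ is a lattice polytope. This step is immediate.

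The substantive implication is (iii) $\Rightarrow$ (i); equivalently, I would prove the contrapositive: if $G$ is \emph{not} bipartite, then ${\mathcal Q}(G)$ is not a lattice polytope. Since $G$ is not bipartite it contains an odd cycle; let $S\subseteq[d]$ be the vertex set of a shortest odd cycle in $G$, so that $G_S$ is itself an odd cycle, hence connected and containing an odd cycle. Define ${\bf v}=(v_1,\dots,v_d)$ by $v_i = 1/2$ for $i\in S$ and $v_i = 0$ for $i\notin S$. One checks ${\bf v}\in{\rm FRAC}(G)$: the inequalities $x_i\ge 0$ are clear, and for an edge $(i,j)$ we have $v_i+v_j\le 1/2+1/2 = 1$. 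By Lemma \ref{usefullemma}, ${\bf v}$ is a vertex of ${\rm FRAC}(G)$, because $S\ne\emptyset$ and the unique connected component $G_S$ of $G_S$ contains an odd cycle. Then $3{\bf v}-(1,\dots,1)$ is a vertex of ${\mathcal Q}(G)$ whose $i$-th coordinate equals $3/2-1 = 1/2$ for each $i\in S$; since $S\ne\emptyset$, this vertex is not a lattice point, so ${\mathcal Q}(G)$ is not a lattice polytope.

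The only point requiring a little care is the very last step: one must make sure that applying the affine map $3{\bf x}-(1,\dots,1)$ does send a vertex to a vertex with a half-integer coordinate, i.e.\ that no cancellation or hidden integrality occurs — but $3\cdot(1/2)-1 = 1/2\notin\mathbb{Z}$ settles this. Thus no step is a genuine obstacle; the proof is essentially an application of Lemma \ref{usefullemma} together with the standard bipartite characterization of ${\rm FRAC}(G)$, packaged into the three implications above.
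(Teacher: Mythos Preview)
Your proof is correct and follows essentially the same route as the paper: the cycle (i) $\Rightarrow$ (ii) $\Rightarrow$ (iii) $\Rightarrow$ (i), with the contrapositive of the last step handled via Lemma~\ref{usefullemma}. The only cosmetic difference is that the paper takes $S$ to be the vertex set of the entire connected component containing an odd cycle, whereas you take $S$ to be the vertex set of a shortest odd cycle; both choices satisfy the hypothesis of Lemma~\ref{usefullemma} (indeed your $G_S$ is connected and contains an odd cycle regardless of whether chords are present), so either works.
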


\begin{proof}
If $G$ is bipartite, then ${\rm FRAC}(G) = {\rm STAB}(G)$ is a lattice polytope.
Hence {\rm (i) $\Rightarrow$ (ii)} holds.
Moreover, {\rm (ii) $\Rightarrow$ (iii)} is trivial.
We now show that
{\rm (iii) $\Rightarrow$ (i)}.
Suppose $G$ contains an odd cycle $C$.
Let $H$ be a connected component of $G$ that contains $C$ and let $V(H)$ be the set of vertices of $H$.
Here, we define ${\bf v}=(v_1,\dots,v_d)$ by $v_i=1/2$ if $i\in V(H)$ and $v_i=0$ if $i \notin V(H)$. 
Then ${\bf v}$ is a $(0,1/2)$-vector in ${\rm FRAC}(G)$.
Moreover, since ${\bf v}$ satisfies the condition in Lemma \ref{usefullemma},
${\bf v}$ is a vertex of ${\rm FRAC}(G)$.
Then $3 {\bf v} - (1,\ldots,1) \in \{-1,1/2\}^d$ is a vertex of ${\mathcal Q}(G)$ that is not a lattice point.
Hence ${\mathcal Q}(G)$ is not a lattice polytope.
\end{proof}

Next we show that ${\mathcal Q}(G)^\vee$ is a lattice polytope.

\begin{prop}
\label{GF}
Suppose $G$ is a finite simple graph without isolated vertices.
Then the origin of $\mathbb{R}^d$ is a unique lattice point belonging to the interior of ${\mathcal Q}(G)$ and
$$\{{\bf e}_i+ {\bf e}_j \mid (i,j) \in E(G)\} \cup \{-{\bf e}_i \mid 1\le i \le d\}$$
is the vertex set of ${\mathcal Q}(G)^{\vee}$.
In particular, if $G$ is a bipartite graph, then ${\mathcal Q}(G)$ is a Gorenstein Fano polytope.
\end{prop}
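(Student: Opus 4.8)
The plan is to read the facet normals of ${\mathcal Q}(G)$ directly off its inequality description and then pass to the polar. Write
$V=\{\,{\bf e}_i+{\bf e}_j\mid (i,j)\in E(G)\,\}\cup\{\,-{\bf e}_i\mid 1\le i\le d\,\}$.
From the second description of ${\mathcal Q}(G)$ we have ${\mathcal Q}(G)=\{\,{\bf x}\in\RR^d\mid\langle{\bf x},{\bf v}\rangle\le 1\ \text{for all}\ {\bf v}\in V\,\}$, since $\langle{\bf x},{\bf e}_i+{\bf e}_j\rangle\le 1$ is the inequality $x_i+x_j\le 1$ and $\langle{\bf x},-{\bf e}_i\rangle\le 1$ is the inequality $x_i\ge -1$. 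The origin satisfies every one of these strictly, and ${\mathcal Q}(G)$ is $d$-dimensional (being a translate of $3\cdot{\rm FRAC}(G)$), so ${\bf 0}$ lies in the interior of ${\mathcal Q}(G)$. For uniqueness, if ${\bf z}\in\ZZ^d$ lies in that interior then $z_i>-1$ forces $z_i\ge 0$ for every $i$, and since $G$ has no isolated vertices each $i$ lies on some edge $(i,j)$, whence $0\le z_i+z_j<1$ forces $z_i=0$; hence ${\bf z}={\bf 0}$.

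Next I would compute ${\mathcal Q}(G)^{\vee}$ by polar duality. The displayed equality says precisely ${\mathcal Q}(G)=({\rm conv}\,V)^{\vee}$, so ${\mathcal Q}(G)^{\vee}=({\rm conv}\,V)^{\vee\vee}$. Since ${\rm conv}\,V$ is compact, convex, and contains the origin (for any edge $(i,j)$, $\tfrac13({\bf e}_i+{\bf e}_j)+\tfrac13(-{\bf e}_i)+\tfrac13(-{\bf e}_j)={\bf 0}$), the double polar returns it, so $({\rm conv}\,V)^{\vee\vee}={\rm conv}\,V$ and hence ${\mathcal Q}(G)^{\vee}={\rm conv}\,V$. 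It remains to check that every point of $V$ is actually a vertex of ${\rm conv}\,V$. For this I would produce, for each ${\bf v}\in V$, a linear functional maximized over $V$ uniquely at ${\bf v}$: for ${\bf v}=-{\bf e}_a$ take ${\bf y}\mapsto -y_a$, which is $1$ at $-{\bf e}_a$, is $0$ at the other $-{\bf e}_c$, and is nonpositive at every ${\bf e}_c+{\bf e}_d$; for ${\bf v}={\bf e}_a+{\bf e}_b$ with $(a,b)\in E(G)$ take ${\bf y}\mapsto y_a+y_b$, which is $2$ at ${\bf e}_a+{\bf e}_b$, is at most $1$ at ${\bf e}_c+{\bf e}_d$ for any other edge (because $\{c,d\}\ne\{a,b\}$), and is nonpositive at every $-{\bf e}_c$. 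Hence $V$ is exactly the vertex set of ${\mathcal Q}(G)^{\vee}$.

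Finally, if $G$ is bipartite then ${\mathcal Q}(G)$ is a lattice polytope by Proposition \ref{whenislattice}, it is $d$-dimensional, and by the first paragraph the origin is its unique interior lattice point; thus ${\mathcal Q}(G)$ is a Fano polytope. Moreover the vertices of ${\mathcal Q}(G)^{\vee}$ listed above all lie in $\ZZ^d$, so ${\mathcal Q}(G)^{\vee}$ is a lattice polytope, which is to say ${\mathcal Q}(G)$ is a Gorenstein Fano polytope.

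I do not expect a genuine obstacle here. The one place requiring care is the identity ${\mathcal Q}(G)^{\vee}={\rm conv}\,V$: one must not tacitly assume that the inequality description of ${\mathcal Q}(G)$ is irredundant. The ``unique maximizer'' functionals in the second paragraph are exactly what rules this out, and they simultaneously identify the vertex set.
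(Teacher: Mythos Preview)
Your proof is correct and follows essentially the same line as the paper's: identify the facet normals of ${\mathcal Q}(G)$ from its inequality description and pass to the polar to read off the vertices of ${\mathcal Q}(G)^{\vee}$. The only difference is that the paper cites as known that the defining inequalities of ${\rm FRAC}(G)$ are irredundant (facet-defining) and then invokes the facet--vertex correspondence, whereas you bypass that citation by taking the double polar and verifying directly, via separating linear functionals, that every element of $V$ is a vertex of ${\rm conv}\,V$ --- a slightly more self-contained route to the same conclusion.
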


\begin{proof}
It is known that
the inequalities $x_i \ge 0$ $(1 \le i \le d)$ and $x_i + x_j \le 1$ ($(i,j) \in E(G)$)
define the facets of ${\rm FRAC}(G)$.
Hence the inequalities $x_i \ge -1$ $(1 \le i \le d)$ and $x_i + x_j \le 1$ ($(i,j) \in E(G)$)
define the facets of ${\mathcal Q}(G)$.
Thus a vector $(v_1,\dots,v_d) \in \RR^d$ belongs to the interior of ${\mathcal Q}(G)$ 
if and only if $v_i > -1$ $(1 \le i \le d)$ and $v_i + v_j < 1$ ($(i,j) \in E(G)$).
It is clear that the origin of $\mathbb{R}^d$ belongs to the interior of ${\mathcal Q}(G)$.
Suppose that $(v_1,\dots,v_d)\in \ZZ^d$ belongs to the interior of ${\mathcal Q}(G)$.
Since $v_i$ and $v_i + v_j$ are integers, we have  $v_i \ge 0$ $(1 \le i \le d)$ and $v_i + v_j \le 0$ ($(i,j) \in E(G)$).
Hence $v_i=0$ for all $i$, i.e, $(v_1,\dots,v_d) = {\bf 0}$.
%
It is known that there is a one-to-one correspondence
between 
the facets of ${\mathcal Q}(G)$
and
the vertices of ${\mathcal Q}(G)^{\vee}$.
The set 
$\{{\bf e}_i+ {\bf e}_j \mid (i,j) \in E(G)\} \cup \{-{\bf e}_i \mid 1\le i \le d\}$
of coefficient vectors of inequalities that define facets 
 is the set of vertices of ${\mathcal Q}(G)^{\vee}$.
Thus, in particular, ${\mathcal Q}(G)^{\vee}$ is a lattice polytope.
By Proposition~\ref{whenislattice},
if $G$ is a bipartite graph, then ${\mathcal Q}(G)$ is a lattice polytope,
and hence a Gorenstein Fano polytope.
\end{proof}

We are now in the position to show that 
the Ehrhart ring of ${\rm FRAC}(G)$ is Gorenstein.

\begin{thm}
Let $G$ be a finite simple graph without isolated vertices. 
Then the Ehrhart ring of ${\rm FRAC}(G)$ is Gorenstein.
\end{thm}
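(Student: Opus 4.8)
The plan is to invoke the Gorenstein criterion of Proposition~\ref{criterionG}. As already observed in the text, for $P = {\rm FRAC}(G)$ the smallest $\delta$ with $\delta(P \setminus \partial P) \cap \mathbb{Z}^d \neq \emptyset$ equals $3$, since $2\,{\rm FRAC}(G)={\mathcal P}(G)$ has no interior lattice point while $(1,\ldots,1)$ lies in the interior of $3\,{\rm FRAC}(G)$; taking $\alpha = (1,\ldots,1)$ then gives $Q = 3P - \alpha = {\mathcal Q}(G)$. Condition (i) of Proposition~\ref{criterionG}, namely that ${\mathcal Q}(G)^{\vee}$ is a lattice polytope, is precisely what Proposition~\ref{GF} establishes. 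So the whole proof reduces to verifying condition (ii): the polytope $\widetilde{P} \subset \mathbb{R}^{d+1}$, defined as the convex hull of $\{(\beta, 0) \mid \beta \in {\rm FRAC}(G)\} \cup \{(0,\ldots,0,1/3)\}$, is facet-reticular.

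To check this, I would first identify the facets of $\widetilde{P}$. Since the apex $\mathbf{a} = (0,\ldots,0,1/3)$ does not lie on the hyperplane $\{x_{d+1} = 0\}$ containing the base ${\rm FRAC}(G) \times \{0\}$, the polytope $\widetilde{P}$ is a pyramid over ${\rm FRAC}(G) \times \{0\}$ with apex $\mathbf{a}$; hence its facets are exactly the base itself together with, for each facet $F$ of ${\rm FRAC}(G)$, the lateral facet $\mathrm{conv}\bigl((F \times \{0\}) \cup \{\mathbf{a}\}\bigr)$. Recall (as used in the proof of Proposition~\ref{GF}) that the facets of ${\rm FRAC}(G)$ are cut out by the inequalities $x_i \ge 0$ $(1 \le i \le d)$ and $x_i + x_j \le 1$ $((i,j) \in E(G))$.

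Then it remains to exhibit, for each facet $\mathcal{F}$ of $\widetilde{P}$, a lattice point on the hyperplane $\mathrm{aff}(\mathcal{F})$. For the base, $\mathrm{aff}(\mathcal{F}) = \{x_{d+1} = 0\}$, which contains the origin. For the lateral facet over $\{x_i = 0\}$, every point of $F \times \{0\}$ and the apex $\mathbf{a}$ have $i$-th coordinate $0$, so $\mathrm{aff}(\mathcal{F}) = \{x_i = 0\}$, which again contains the origin. For the lateral facet over $\{x_i + x_j = 1\}$ with $(i,j) \in E(G)$, the hyperplane $\{x_i + x_j + 3 x_{d+1} = 1\}$ contains $F \times \{0\}$ and contains $\mathbf{a}$ (as $3 \cdot \tfrac{1}{3} = 1$), hence equals $\mathrm{aff}(\mathcal{F})$; and it contains the lattice point $\mathbf{e}_i \in \mathbb{Z}^{d+1}$. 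Thus every facet-defining hyperplane of $\widetilde{P}$ meets $\mathbb{Z}^{d+1}$, so $\widetilde{P}$ is facet-reticular, and Proposition~\ref{criterionG} yields that the Ehrhart ring of ${\rm FRAC}(G)$ is Gorenstein.

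The only point requiring genuine care is the claim that $\widetilde{P}$ has no facets other than the base and the laterals, i.e.\ that it is a pyramid in the strict sense; this is immediate once one notes that $\mathbf{a}$ is not in the affine hull of ${\rm FRAC}(G)\times\{0\}$, its last coordinate being $1/3 \neq 0$. Everything else is a routine verification, so I do not anticipate a serious obstacle here: the substantive content of the theorem has already been front-loaded into Propositions~\ref{whenislattice} and \ref{GF}.
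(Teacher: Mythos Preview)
Your proof is correct and follows essentially the same route as the paper: invoke Proposition~\ref{criterionG} with $\delta=3$, $\alpha=(1,\ldots,1)$, get condition~(i) from Proposition~\ref{GF}, and verify condition~(ii) by checking that each facet hyperplane of the pyramid $\widetilde{P}$ meets $\mathbb{Z}^{d+1}$. The only cosmetic difference is that the paper, rather than computing the lateral hyperplanes explicitly, simply observes that for any such hyperplane $H$ the slice $H\cap\{x_{d+1}=0\}$ is already one of the facet hyperplanes $\{x_i=0\}$ or $\{x_i+x_j=1\}$ of ${\rm FRAC}(G)\times\{0\}$ and hence contains a lattice point.
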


\begin{proof}
It is enough to show that conditions (i) and (ii) in Proposition \ref{criterionG} 
are satisfied when
$P = {\rm FRAC}(G)$, $\delta=3, \alpha=(1,\dots,1)$ and $Q ={\mathcal Q}(G)$.
First, Proposition \ref{GF} guarantees that $Q^\vee$ is a lattice polytope.
Let $F=H \cap \widetilde{P}$ be a facet of $\widetilde{P}$,
where $H$ is a hyperplane in $\mathbb{R}^{d+1}$.
We may assume that $H \neq \{x_{d+1}=0\}$.
Then $F'=F\cap\{x_{d+1}=0\}$ is a facet of $\{(\beta,0)\in \mathbb{R}^{d+1} \mid \beta \in P\}$ whose supporting hyperplane is $H$.
Therefore, $H'=H\cap\{x_{d+1}=0\}$ is defined by $\{x_{d+1}=0\}$ and
either $x_i+x_j =1$ $((i,j)\in E(G))$ 
or $x_i=0$ $(1\le i \le d)$.
Hence it is clear that there exists a lattice point in $H' \subset H$.
Thus 
condition (ii) in Proposition~\ref{criterionG} holds.
Therefore, the Ehrhart ring of $P$ is Gorenstein by Proposition~\ref{criterionG}.
\end{proof}

\section{The Ehrhart series of ${\rm FRAC}(G)$}

In this section, we show that we can calculate the Ehrhart series and the Ehrhart quasi-polynomial
of ${\rm FRAC}(G)$ from that of ${\mathcal P}(G)$.
Let $G$ be a simple graph on the vertex set $[d]$ without isolated vertices.
Since the interior of ${\mathcal P}(G)$ possesses no lattice points, and the interior of $2 {\mathcal P}(G)$ has a lattice point,
it follows that $\deg \delta({\mathcal P}(G),t) = d+1 -2= d-1$.
On the other hand, the degree of $E({\rm FRAC}(G),t)$ is $-3$ as a rational function.
Given a rational convex polytope ${\mathcal P}$,
the period of $i({\mathcal P}, n)$ is a divisor of the smallest positive integer $\alpha$
for which $\alpha {\mathcal P}$ is a lattice polytope.
See \cite[Theorem 4.6.25]{Rich}.
Hence $i({\rm FRAC}(G), n)$ is a quasi-polynomial of period at most 2.
Thus there exist polynomials $i^{\rm odd}({\rm FRAC}(G),n)$ and $i^{\rm even}({\rm FRAC}(G),n)$ of degree $d$ such that
$$
i({\rm FRAC}(G),n)
=\begin{cases} 
i^{\rm odd}({\rm FRAC}(G),n) & \mbox{if } n \mbox{ is odd,} \\
\\
i^{\rm even}({\rm FRAC}(G),n) & \mbox{if } n \mbox{ is even.} 
\end{cases}
$$ 

\medskip

\noindent
In particular, if $G$ is bipartite, then $i^{\rm odd}({\rm FRAC}(G),n) = i^{\rm even}({\rm FRAC}(G),n)$.

\begin{thm}
\label{hpoly}
Let $G$ be a simple graph on the vertex set $[d]$ without isolated vertices
and let $\delta({\mathcal P}(G), t)=\delta_0+\delta_1t+\dots+\delta_{d-1}t^{d-1}$.
Then we have
\begin{align*}
E({\rm FRAC}(G),t) & =  
\frac{\delta({\mathcal P}(G), t^2) + t^{2d-1} \delta({\mathcal P}(G), 1/t^2)  }{(1-t^2)^{d+1}}\\
       & = 
\frac{\delta_0 + \delta_{d-1}t + \delta_1t^2 + \delta_{d-2}t^3 + \dots 
+ \delta_{d-1}t^{2d-2} + \delta_0t^{2d-1}}{(1-t^2)^{d+1}},
\end{align*}
where $
(\delta_0,\delta_{d-1}, \delta_1,\delta_{d-2},\ldots,\delta_{d-1},\delta_0)
$ is symmetric and unimodal.
In addition, 
\begin{eqnarray*}
i^{\rm odd}({\rm FRAC}(G),2k+1)
&=& (-1)^{d} i^{\rm even}({\rm FRAC}(G),-2k-4)\\
&=& (-1)^{d} i({\mathcal P}(G),-k-2).
\end{eqnarray*}
\end{thm}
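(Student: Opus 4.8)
The plan is to prove Theorem~\ref{hpoly} in two stages: first establish the rational-function identity for $E({\rm FRAC}(G),t)$, and then extract the quasi-polynomial consequences by reading off Ehrhart reciprocity. The key structural input is that the Ehrhart ring of ${\rm FRAC}(G)$ is Gorenstein (proved in the previous section) and that $\delta({\mathcal P}(G),t)$ is alternatingly increasing (Theorem~\ref{ai}); together these will force the numerator to have exactly the claimed interleaved shape and to be symmetric and unimodal.

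For the first identity, I would start from the fact that $i({\rm FRAC}(G),n)$ has period at most $2$, so $E({\rm FRAC}(G),t) = g({\rm FRAC}(G),t)/(1-t^2)^{d+1}$ for some polynomial $g$ of degree $2(d+1)-3 = 2d-1$. The even part of the Ehrhart quasi-polynomial is controlled by ${\mathcal P}(G) = 2\,{\rm FRAC}(G)$: indeed $i({\rm FRAC}(G),2k) = i({\mathcal P}(G),k)$, so $\sum_{k\ge 0} i({\mathcal P}(G),k)\,t^{2k} = \delta({\mathcal P}(G),t^2)/(1-t^2)^{d+1}$ accounts for all even-degree terms of $g$. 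It then remains to identify the odd-degree terms, i.e.\ the generating function $\sum_{k\ge 0} i({\rm FRAC}(G),2k+1)\,t^{2k+1}$. Here I would invoke Gorensteinness: when $A_K(P)$ is Gorenstein, the numerator of $E(P,t)$ is palindromic, i.e.\ $g({\rm FRAC}(G),t) = t^{2d-1} g({\rm FRAC}(G),1/t)$ (the degree $2d-1$ being $m(d+1)-v$ with $m=2$, $v=3$). Since the even part of $g$ is $\delta({\mathcal P}(G),t^2) = \sum_{j=0}^{d-1}\delta_j t^{2j}$, palindromicity forces the odd part to be $\sum_{j=0}^{d-1}\delta_j t^{2d-1-2j} = t^{2d-1}\delta({\mathcal P}(G),1/t^2)$, which is exactly the claimed formula $g({\rm FRAC}(G),t) = \delta({\mathcal P}(G),t^2) + t^{2d-1}\delta({\mathcal P}(G),1/t^2)$. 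Writing out the coefficients term by term then gives the sequence $(\delta_0,\delta_{d-1},\delta_1,\delta_{d-2},\ldots,\delta_{d-1},\delta_0)$; symmetry is immediate from the palindrome, and unimodality of this interleaving is precisely the content of $(\delta_0,\ldots,\delta_{d-1})$ being alternatingly increasing in the sense of Theorem~\ref{ai}.

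For the reciprocity statement, I would apply Ehrhart--Macdonald reciprocity to the lattice polytope ${\mathcal P}(G)$, which has dimension $d$: $i({\mathcal P}(G),-k) = (-1)^d\,\sharp(k\,(\operatorname{int}{\mathcal P}(G))\cap\ZZ^d)$ for $k>0$. The link to the odd values of $i({\rm FRAC}(G),\cdot)$ comes from the relation $k\,(\operatorname{int}{\mathcal P}(G)) = \operatorname{int}(2k\,{\rm FRAC}(G))$ and the observation that a point lies in the interior of $(2k+\varepsilon)\,{\rm FRAC}(G)$ for small $\varepsilon$ iff it lies in $(2k+1)\,{\rm FRAC}(G)$ after accounting for the integrality of the defining inequalities $x_i\ge 0$, $x_i+x_j\le 1$: scaling by an odd integer $2k+1$ versus the even $2k+2$ yields the same lattice points in the relevant strict/non-strict regions. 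More cleanly, I would compare Hilbert series: from the rational function identity just proved, $\sum_{k\ge 0} i({\rm FRAC}(G),2k+1)t^{2k+1} = t^{2d-1}\delta({\mathcal P}(G),1/t^2)/(1-t^2)^{d+1}$, and substituting $t\mapsto 1/t$ and comparing with the even generating function $\delta({\mathcal P}(G),t^2)/(1-t^2)^{d+1}$ shows that $i^{\rm odd}({\rm FRAC}(G),n)$ and $i^{\rm even}({\rm FRAC}(G),-n-3)$ agree up to sign $(-1)^d$; setting $n=2k+1$ gives $i^{\rm odd}({\rm FRAC}(G),2k+1) = (-1)^d i^{\rm even}({\rm FRAC}(G),-2k-4)$. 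Finally $i^{\rm even}({\rm FRAC}(G),2m) = i({\mathcal P}(G),m)$ as polynomials, so $i^{\rm even}({\rm FRAC}(G),-2k-4) = i({\mathcal P}(G),-k-2)$, completing the chain.

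The main obstacle I anticipate is making the passage from Gorensteinness of the Ehrhart \emph{ring} to palindromicity of the numerator fully rigorous in the quasi-polynomial (period $2$) setting: one must be careful that the symmetry of $g({\rm FRAC}(G),t)$ is about the correct center $(2d-1)/2$, using that the smallest $k$ with $k\,{\rm FRAC}(G)$ a lattice polytope is $m=2$ and the smallest $k$ with an interior lattice point is $v=3$, so $\deg g = m(d+1)-v = 2d-1$ and $E({\rm FRAC}(G),t)$ has degree $-v=-3$ as a rational function; the Gorenstein condition then says $g$ is symmetric of this degree. Once that bookkeeping is pinned down, everything else is a direct matching of coefficients, and the unimodality is handed to us verbatim by Theorem~\ref{ai}.
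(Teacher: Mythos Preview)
Your approach is essentially the same as the paper's: split $E({\rm FRAC}(G),t)$ into its even and odd parts, identify the even part with $\delta({\mathcal P}(G),t^2)/(1-t^2)^{d+1}$ via $i({\rm FRAC}(G),2k)=i({\mathcal P}(G),k)$, and then use Gorensteinness (palindromicity of the numerator of degree $2d-1$) to pin down the odd part as $t^{2d-1}\delta({\mathcal P}(G),1/t^2)$; the unimodality is read off from Theorem~\ref{ai}, and the reciprocity identity follows by rewriting the odd generating function via $E({\mathcal P}(G),1/t^2)$. The only caveat is that your first pass at the reciprocity (the $(2k+\varepsilon)$ interior-point argument) is muddled and should simply be dropped in favor of your ``more cleanly'' Hilbert-series comparison, which is exactly the computation the paper carries out.
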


\begin{proof}
Let $W={\rm FRAC}(G)$ and $P= {\mathcal P}(G)$.
Then
$$E(W,t)=\sum_{k \ge 0} i^{\rm even}(W,2k)t^{2k}+\sum_{k \ge 0} i^{\rm odd}(W,2k+1)t^{2k+1}.$$
Since $i^{\rm even}(W,2k) =i(2W,k)  =i(P,k)$, we have 
$$\sum_{k \ge 0} i^{\rm even}(W,2k)t^{2k}
=\sum_{k \ge 0} i(P,k)(t^2)^k=\dfrac{\delta(P,t^2)}{(1-t^2)^{d+1}}.$$
Since the degree of $i^{\rm odd}(W,2k+1)$ is $d$,
by \cite[Corollary 4.3.1]{Rich2}, we have
$$\sum_{k\ge 0} i^{\rm odd}(W,2k+1)t^{2k+1}
=t \sum_{k\ge0}  i^{\rm odd}(W,2k+1) (t^2)^k
=t\dfrac{a(t^2)}{(1-t^2)^{d+1}}$$
where $a(t)$ is a polynomial of degree $\le d$.
Thus
$$E(W,t)=\dfrac{\delta(P,t^2)}{(1-t^2)^{d+1}}+t\cdot\dfrac{a(t^2)}{(1-t^2)^{d+1}}=\dfrac{\delta(P,t^2)+ta(t^2)}{(1-t^2)^{d+1}}.$$
Since the degree of $E(W,t)$ is $-3$ as a rational function, 
the degree of $\delta(P,t^2)+ta(t^2)$ is $2d -1$.
Hence $\deg a(t) = d-1$ $(= \deg \delta (P, t))$.
Moreover, since the Ehrhart ring of $W$ is Gorenstein,
the coefficients of $\delta(P,t^2)+ta(t^2)$ are symmetric.
Thus $a(t) = t^{d-1}\delta(P,1/t)$ and 
$\delta(P,t^2)+ta(t^2) = \delta(P,t^2) + t^{2d-1} \delta(P,1/t^2)$. 
It is known that 
$E(P, 1/t) = -\sum_{k \ge 1}i(P,-k) t^{k}$ (see \cite[Chapter 4]{Rich}).
Hence
\begin{align*}
\sum_{k\ge 0} i^{\rm odd}(W,2k+1)t^{2k+1}
&=\dfrac{t^{2d-1}\delta(P,1/t^2)}{(1-t^2)^{d+1}}\\
&=\frac{(-1)^{d+1}}{t^3} \dfrac{\delta(P,1/t^2)}{ (1-1/t^2)^{d+1}}\\
&=\frac{(-1)^{d+1}}{t^3} E(P, 1/t^2)\\
&=\frac{(-1)^d}{t^3} \sum_{k\ge 1} i(P,-k) t^{2k}.
\end{align*}
Thus 
$
i^{\rm odd}(W,2k+1)
= (-1)^{d} i(P,-k-2)
= (-1)^{d} i^{\rm even}(W,-2k-4),
$
as desired.
\end{proof}

\begin{rei}
Let $W={\rm FRAC}(K_d)$ and $P={\mathcal P}(K_d)$
where $K_d$ is a complete graph with $d$ vertices.
It is known \cite[Example 27]{Steingrimsson} that
$\delta(P,t)=A_d(t)+dtA_{d-1}(t)$.
Let
$$
E(W,t) = \frac{b_0+b_1t+\dots+b_{2d-1}t^{2d-1}}{(1-t^2)^{d+1}}.
$$
Since
\begin{align*}
\delta(P,t)&=A_d(t)+dtA_{d-1}(t)\\
&=\sum_{i=0}^{d-1} A(d,i)t^i+dt\sum_{i=0}^{d-2}A(d-1,i)t^i\\
&=\sum_{i=0}^{d-1} A(d,i)t^i+d\sum_{i=1}^{d-1}A(d-1,i-1)t^i\\
&=1+\sum_{i=1}^{d-1} (A(d,i)+dA(d-1,i-1)) t^i
\end{align*}
hold, the $\delta$-vector $(\delta_0,\ldots,\delta_{d-1})$ 
of $P$ satisfies $\delta_0=1$ and $\delta_i=A(d,i)+dA(d-1,i-1)$
for $i =1,2,\ldots,d-1$.
By Theorem \ref{hpoly},
we can obtain the formula $b_0=1$ and 
$b_i=A(d,\lfloor i/2 \rfloor)+dA(d-1, \lfloor (i-1)/2 \rfloor) $
for $i = 1,2,\ldots,2d-1.$
\end{rei}

\begin{rei}
Let $W_d = {\rm FRAC}(C_d)$ where $C_d$ is an odd cycle of length $d$.
We computed the numerator $g(W_d,t)$
of $E(W_d, t)=g(W_d,t)/(1-t^2)^{d+1}$ for $d = 3,5,7, 9$ by using software {\tt Normaliz} (\cite{normaliz}).
\begin{align*}
g(W_3,t) &=1+4t+7t^2+7t^3+4t^4+t^5.
\end{align*}
\begin{align*}
g(W_5,t) &=1+11t+51t^2+131t^3+206t^4\\
&\ \ +206t^5+131t^6+51t^7+11t^8+t^9.\\
g(W_7,t)  & =1+29t+281t^2+1408t^3+4320t^4+8814t^5+12475t^6\\
 & \ \ +12475t^7+8814t^8+4320t^9+1408t^{10}+281t^{11}+29t^{12}+t^{13}.\\
g(W_9,t) & =1+76t+1450t^2+12844t^3+67000t^4+230986t^5+561004t^6\\
 & \ \ +996310t^7+1321369t^8+1321369t^9+996310t^{10}+561004t^{11}\\
 & \ \ +230986t^{12}+67000t^{13}+12844t^{14}+1450t^{15}+76t^{16}+t^{17}.
\end{align*}
\end{rei}

\section{The dual polytope of ${\mathcal Q}(G)$}

In this section, we will discuss the dual polytope ${\mathcal Q}(G)^{\vee}$ of ${\mathcal Q}(G)$.
Recall that 
$$
{\mathcal Q}(G)^{\vee} = {\rm Conv} (\{{\bf e}_i+ {\bf e}_j \mid (i,j) \in E(G)\} \cup \{-{\bf e}_i \mid 1\le i \le d\})
$$
if $G$ has no isolated vertices.
It is easy to see that ${\mathcal Q}(G)^{\vee}$ is Fano. 
A lattice polytope $P \subset \RR^d$ is called {\em normal} if 
$\mathbb{Z}_{\ge 0} A = \mathbb{Q}_{\ge 0} A \cap  \mathbb{Z}A$,
where 
$$
A=
\begin{pmatrix} 
 {\bf a}_1 & \cdots & {\bf a}_n\\
 1 &\cdots& 1 
\end{pmatrix}
$$
such that $\{ {\bf a}_1, \ldots, {\bf a}_n\}
=
P \cap \ZZ^d $.
Here $\mathbb{Z}A = \{\sum_{i=1}^n z_i ({\bf a}_i ,1) \ | \ z_i \in \mathbb{Z}\}$,
for example.
A triangulation $\Delta$ of $P$ is called {\em unimodular}
if the normalized volume of each maximal simplex of $\Delta$ is one.
If $\mathbb{Z}A = \mathbb{Z}^{d+1}$, then the normalized volume
of each maximal simplex is equal to the absolute value of the corresponding maximal minor of $A$.
See, \cite[Section 5.5]{dojo}.
It is known that a lattice polytope $P$ is normal if $P$ has a unimodular triangulation
(\cite[Theorem~5.6.7]{dojo}).

\begin{thm}
Let $G$ be a finite simple graph without isolated vertices. 
Then the following conditions are equivalent.
\begin{enumerate}
\item [{\rm (i)}] The graph $G$ is a bipartite graph{\rm ;}
\item [{\rm (ii)}] The dual polytope ${\mathcal Q}(G)^{\vee}$ has a unimodular
triangulation{\rm ;}
\item [{\rm (iii)}] The dual polytope ${\mathcal Q}(G)^{\vee}$ is normal{\rm ;}
\item [{\rm (iv)}] The dual polytope ${\mathcal Q}(G)^{\vee}$ is
a Gorenstein Fano polytope.
\end{enumerate}
\end{thm}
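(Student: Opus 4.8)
The plan is to establish the two equivalences (i) $\Leftrightarrow$ (iv) and (i) $\Rightarrow$ (ii) $\Rightarrow$ (iii) $\Rightarrow$ (i). The implication (ii) $\Rightarrow$ (iii) is immediate from \cite[Theorem~5.6.7]{dojo}. For (i) $\Leftrightarrow$ (iv): if $G$ is bipartite then ${\mathcal Q}(G)$ is a Gorenstein Fano polytope by Proposition~\ref{GF}, hence so is its dual polytope ${\mathcal Q}(G)^{\vee}$, because reflexivity is preserved under taking the dual; conversely, if ${\mathcal Q}(G)^{\vee}$ is a Gorenstein Fano polytope then its dual polytope is a lattice polytope, and since the origin lies in the interior of ${\mathcal Q}(G)$ (Proposition~\ref{GF}) we have $({\mathcal Q}(G)^{\vee})^{\vee}={\mathcal Q}(G)$, so ${\mathcal Q}(G)$ is a lattice polytope and $G$ is bipartite by Proposition~\ref{whenislattice}.

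For (i) $\Rightarrow$ (ii), assume $G$ is bipartite, so by the above ${\mathcal Q}(G)^{\vee}$ is reflexive with the origin as its unique interior lattice point; each facet of ${\mathcal Q}(G)^{\vee}$ then lies on a supporting hyperplane at lattice distance $1$ from the origin. I would use the pulling triangulation of ${\mathcal Q}(G)^{\vee}$ in which the origin is pulled first: it is regular, it subdivides ${\mathcal Q}(G)^{\vee}$ into the cones from the origin over (pulling triangulations of) the facets, and a cone from the origin over a unimodular $(d-1)$-simplex lying in a facet of a reflexive polytope is a unimodular $d$-simplex. So it is enough to show that every facet of ${\mathcal Q}(G)^{\vee}$ admits a unimodular triangulation. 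The facets correspond to the vertices $3\rho(W)-(1,\ldots,1)$ of ${\mathcal Q}(G)$ with $W$ a stable set of $G$, and one computes that the corresponding facet is
$$
F_W={\rm Conv}\bigl(\{{\bf e}_i+{\bf e}_j\mid (i,j)\in E(G),\ i\in W,\ j\notin W\}\cup\{-{\bf e}_i\mid i\notin W\}\bigr).
$$
After negating the coordinates indexed by $[d]\setminus W$ and adjoining one auxiliary coordinate so that all vertices lie on $\{\sum x_i=0\}$, $F_W$ becomes unimodularly equivalent to the convex hull of the columns of a (signed) incidence matrix of a connected graph on $\{0,1,\ldots,d\}$ — explicitly, of the directed graph with an arc $j\to i$ for each edge $(i,j)\in E(G)$ with $i\in W$, $j\notin W$, together with an arc $0\to i$ for each $i\notin W$. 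Such incidence matrices are totally unimodular, and the maximal simplices on the vertex set of the polytope are exactly those indexed by spanning trees of the graph, so they all have normalized volume $1$; hence every triangulation of $F_W$ is unimodular, and in particular so is the pulling one. Carrying out this identification of $F_W$ and the argument that its maximal simplices are the spanning trees is, I expect, the main technical obstacle.

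For (iii) $\Rightarrow$ (i), I would argue by contraposition. If $G$ is not bipartite it contains an odd cycle $C$, say of length $\ell=2k+1$, and I claim that $\rho(V(C))$ certifies that ${\mathcal Q}(G)^{\vee}$ is not normal. Summing $u_i+u_j\le 1$ over the edges of $C$ shows $\sum_{i\in V(C)}u_i\le \ell/2$ for every $u\in{\rm FRAC}(G)$, with equality at the point $\tfrac12\rho(V(C))$, which is a vertex of ${\rm FRAC}(G)$ by Lemma~\ref{usefullemma}; since $\langle\rho(V(C)),3u-(1,\ldots,1)\rangle=3\sum_{i\in V(C)}u_i-\ell$, this gives $\rho(V(C))\in m\,{\mathcal Q}(G)^{\vee}$ precisely when $m\ge k+1$. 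Now suppose $\rho(V(C))=q_1+\cdots+q_{k+1}$ with each $q_t$ a lattice point of ${\mathcal Q}(G)^{\vee}$. Writing $\sigma(x)=\sum_{i\in V(C)}x_i$, one has $\sigma(x)\le 2$ for all $x\in{\mathcal Q}(G)^{\vee}$, and $\sigma(q_t)=2$ forces $q_t$ to be a convex combination of the vertices ${\bf e}_a+{\bf e}_b$ with $(a,b)\in E(G)$ and $a,b\in V(C)$, hence to be supported on $V(C)$. Since $\sum_t\sigma(q_t)=\ell=2(k+1)-1$, exactly $k$ of the $q_t$ have $\sigma(q_t)=2$ and the remaining one, say $q$, is supported on $V(C)$ with $\sum_i(q)_i=\sigma(q)=1$. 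But the facet inequality of ${\mathcal Q}(G)^{\vee}$ arising from the vertex $3{\bf e}_j-(1,\ldots,1)$ of ${\mathcal Q}(G)$ reads $3(q)_j-\sum_i(q)_i\le 1$, so $(q)_j\le 0$ for every $j$, contradicting $\sum_i(q)_i=1$. Hence $\rho(V(C))$ lies in $(k+1){\mathcal Q}(G)^{\vee}\cap{\ZZ}^d$ but is not a sum of $k+1$ lattice points of ${\mathcal Q}(G)^{\vee}$; since the lattice $\ZZ A$ attached to ${\mathcal Q}(G)^{\vee}$ equals $\ZZ^{d+1}$ (as ${\mathcal Q}(G)^{\vee}$ contains the origin and each $-{\bf e}_i$), this shows ${\mathcal Q}(G)^{\vee}$ is not normal.
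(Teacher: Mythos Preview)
Your logical skeleton, the equivalence (i)~$\Leftrightarrow$~(iv), and the implication (ii)~$\Rightarrow$~(iii) match the paper exactly. Your argument for (iii)~$\Rightarrow$~(i) is also essentially the paper's: the paper exhibits the same witness, namely $(\rho(V(C)),\,k+1)\in\mathbb{Q}_{\ge 0}\mathcal A'_G\cap\ZZ^{d+1}\setminus\mathbb{Z}_{\ge 0}\mathcal A'_G$, but derives the contradiction by elementary bookkeeping in a putative nonnegative integer representation (summing the $i$th-coordinate equations and comparing with the $(d{+}1)$st), rather than via the facet inequalities of $\mathcal Q(G)^\vee$ coming from vertices of $\mathcal Q(G)$ as you do. Both are correct and of comparable length; your version has the mild advantage of not assuming the $q_t$ are vertices.

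The real difference is in (i)~$\Rightarrow$~(ii), where you work considerably harder than necessary. The paper bypasses the facet-by-facet analysis entirely. With the configuration matrix
\[
\mathcal A'_G=\begin{pmatrix}{\bf 0}&\mathcal A_G&-E_d\\ 1&1\cdots 1&1\cdots 1\end{pmatrix},
\]
it simply observes that for bipartite $G$ the incidence matrix $\mathcal A_G$ is totally unimodular, and hence so is $B=(\mathcal A_G\mid -E_d)$ (appending $\pm E_d$ preserves total unimodularity). A pulling triangulation in which the origin is pulled first has every maximal simplex containing the origin, and the normalized volume of such a simplex is the absolute value of the corresponding $d\times d$ minor of $B$, hence~$1$. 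This one-line application of total unimodularity to the full matrix $B$ replaces your identification of each facet $F_W$ with a root-type polytope and the spanning-tree argument; your route can be completed (your auxiliary graph on $\{0,1,\ldots,d\}$ is indeed connected and its signed incidence matrix is totally unimodular), but the ``main technical obstacle'' you anticipate simply does not arise in the paper's approach.
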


\begin{proof}
Since ${\mathcal Q}(G)^{\vee}$ is Fano, 
and since $({\mathcal Q}(G)^\vee)^\vee = {\mathcal Q}(G)$,
${\mathcal Q}(G)^{\vee}$ is Gorenstein Fano if and only if
${\mathcal Q}(G)$ is a lattice polytope.
By Proposition \ref{whenislattice},  ${\mathcal Q}(G)$ is a lattice polytope
if and only if $G$ is bipartite.
Hence we have {\rm (i)} $\Leftrightarrow$ {\rm (iv)}.
Moreover, (ii) $\Rightarrow$ (iii) holds in general.
Let $\mathcal{A}_G$ be the vertex-edge incidence matrix of $G$ and let
$\mathcal{A}'_G$ be the configuration matrix of ${\mathcal Q}(G)^{\vee}$, namely,
$$\mathcal{A}'_G=\begin{pmatrix} {\bf 0} & \mathcal{A}_G & -E_d \\1 & 1 \cdots 1 & 1 \cdots 1 \end{pmatrix},$$
where $E_d$ is an identity matrix.
Then $\mathbb{Z} \mathcal{A}'_G =\mathbb{Z}^{d+1}$.
Hence
${\mathcal Q}(G)^{\vee}$ is normal if and only if
$\mathbb{Z}_{\ge 0} \mathcal{A}'_G =
 \mathbb{Q}_{\ge 0} \mathcal{A}'_G \cap  \mathbb{Z}^{d+1}$.

{\rm (i) $\Rightarrow$ (ii):} 
Suppose that $G$ is bipartite.
It is known \cite{IP} that the vertex-edge incidence matrix of any bipartite graph is 
{\em totally unimodular}, i.e., the determinant of 
every square non-singular submatrix is $\pm 1$.
Hence it follows that the submatrix
$B=\begin{pmatrix} \mathcal{A}_G & -E_d  \end{pmatrix}$
of $\mathcal{A}'_G$ is totally unimodular.
Let $\Delta$ be a {\em pulling triangulation} (\cite{A}, \cite[Proposition~5.6.5]{dojo})
 of ${\mathcal Q}(G)^{\vee}$ such that the origin is a vertex of
every maximal simplex in $\Delta$.
Such a triangulation is obtained by a Gr\"obner basis of the toric ideal of $\mathcal{A}'_G$
with respect to a reverse lexicographic order such that the smallest variable corresponds
to the origin.
Then the normalized volume of each maximal simplex in $\Delta$ is equal to 
the absolute value of the corresponding maximal minor of $B$.
Since $B$ is totally unimodular, each maximal minor of $B$ is $\pm 1$, and hence
the triangulation $\Delta$ is unimodular.

(iii) $\Rightarrow$ (i): Suppose that the graph $G$ contains an odd cycle $C$.
Now, we will show that ${\mathcal Q}(G)^{\vee}$ is not normal, that is, 
$\mathbb{Z}_{\ge 0} \mathcal{A}'_G \ne \mathbb{Q}_{\ge 0} \mathcal{A}'_G
\cap  \mathbb{Z}^{d+1}$.
We may assume that $C=(1,2,\ldots, 2k+1)$.
Let 
\begin{eqnarray*}
{\bf u} &= &
\frac{1}{2}
\left(
{\bf e}_{d+1}+
({\bf e}_1 + {\bf e}_{2k+1}+ {\bf e}_{d+1})+
\sum_{i=1}^{2k} ({\bf e}_i + {\bf e}_{i+1}+ {\bf e}_{d+1})
\right)\\
& = &
(k+1) {\bf e}_{d+1} +
\sum_{i=1}^{2k+1} {\bf e}_i 
.
\end{eqnarray*}
Then ${\bf u}$ belongs to $\mathbb{Q}_{\ge 0} \mathcal{A}'_G \cap\mathbb{Z}^{d+1}$. 
It is enough to show that ${\bf u}\notin\mathbb{Z}_{\ge 0}\mathcal{A}'_G$.
Suppose 
\begin{equation}
\label{u}
{\bf u}= \gamma {\bf e}_{d+1}
+ \sum_{(i,j)\in E(G)} \alpha_{ij}({\bf e}_i + {\bf e}_{j}+ {\bf e}_{d+1})
+\sum_{i=1}^d \beta_i(-{\bf e}_i +  {\bf e}_{d+1}) 
\end{equation}
for some $\alpha_{ij},\beta_i \in \mathbb{Z}_{\ge 0}$.
Then the coefficient of ${\bf e}_i$ ($1 \le i \le 2k+1$) in (\ref{u}) is
$1=\sum_{(i,j)\in E(G)} \alpha_{ij}-\beta_i,$
and that of ${\bf e}_i$ ($2k+2 \le i \le d$) in (\ref{u}) is
$0=\sum_{(i,j)\in E(G)} \alpha_{ij}-\beta_i.$
By summing up the equations for $1\le i \le d$, we obtain
\begin{equation}
\label{ni}
2k+1=2\sum_{(i,j) \in E(G)}\alpha_{ij}-\sum_{i=1}^d \beta_i.
\end{equation}
On the other hand, the coefficient of ${\bf e}_{d+1}$ in (\ref{u}) is
\begin{equation}\label{san}
k+1=\gamma + \sum_{(i,j) \in E(G)}\alpha_{ij}+\sum_{i=1}^d \beta_i.
\end{equation}
Since $\gamma$ and $\sum_{i=1}^d \beta_i$ are nonnegative,
by equations (\ref{ni}) and (\ref{san}), we obtain
$$k+\dfrac{1}{2} \le \sum_{(i,j)\in E(G)}\alpha_{ij} \le k+1.$$
Since $\sum_{(i,j)\in E(G)} \alpha_{ij} \in \mathbb{Z},$
it follows that $\sum_{(i,j)\in E(G)} \alpha_{ij}=k+1$.
Hence, by equation (\ref{ni}), we have
$\sum_{i=1}^d \beta_i = 1$.
Thus, by equation (\ref{san}), we have $\gamma + 1 = 0$, which is a contradiction.
\end{proof}

\end{document}